\newtheorem{lemma}{Lemma}
\theoremstyle{definition}
\newtheorem{defn}{Definition}
\newtheorem{Theorem}{Theorem}
\theoremstyle{remark}
\newtheorem{rmk}{Remark}
\newtheorem{example}{Example}
\title{Topological perspective on Statistical Quantities I}
\date{}
\author{Nissim Ranade}
\begin{document}
\maketitle

\section*{Introduction}
In statistics cumulants are defined to be functions that measure dependence of random variables. If the random variables are independent the cumulants are zero. The idea of cumulants can be generalized to non-commutative probability theory by defining the Boolean cumulants. For a map $e$ between associative algebras these maps measure the deviation of $e$ from being an algebra map. 

The Boolean cumulants are a family of maps $K_n:V^{\otimes n}\rightarrow \mathbb{K}$ defined as follows. 
\[K_1(a) = e(a)\]
\[K_2(a,b) =  e(ab)-e(a)e(b) \]
\[K_3(a,b,c) = e(abc)-e(a)e(bc) -e(ab)e(c)+e(a)e(b)e(c) \]

$K_n$ in general is given by the following formula. 
\[K_n(a_1, a_2,\ldots,a_n) = \sum \pm e(a_1,\ldots,a_i)e(a_{i+1},\ldots)\ldots e(\ldots,a_n) \] 
The above sum is taken over all ordered partitions of $n$. The even partitions occur with negative signs and the odd partitions occur with positive signs. Expectations of products can also be computed using Boolean cumulants. If $e$ is a map of algebras then the cumulants are all zero.

More generally the Boolean cumulants can be defined using the above formulas for chain maps between differential graded algebras. For instance, consider the differential forms $\Omega(M)$ on a manifold $M$ and the cochains $C^*(M)$ on a discrete simplicial structure on the manifold. There is a chain map $I$ from $\Omega(M)$ to $C^*(M)$ given by integrating forms on the simplices. This map induces an isomorphism on the cohomologies of the two complexes. The differential forms have an algebra structure given by the wedge product. An associative cup product can be defined on $C^*(M)$ but $I$ is not  a map of algebras for this product. Both the products however induce products on cohomology and the isomorphism induced by $I$ on cohomology respects the induced products. Thus while the cumulants exist at the level of cochain complexes they vanish on cohomology. 

While $I$ is not a map of algebras it happens to be the first term of what is called an $A_\infty$ morphism. Associative algebras are a special case of $A_\infty$ algebras which are algebras that are associative up to infinite homotopy. $A_\infty$ morphisms are morphisms of these algebras. We have the following theorem which relate the Boolean cumulants to the structure of $A_\infty$ morphisms.

\begin{Theorem}
Let $(A,\wedge_A, d_A)$ and $(B,\wedge_B, d_B)$ be two \textit{dga}s. Let $p$ be a chain map from $A$ to $B$. Let $K_2$, $K_3$ and so on be the Boolean cumulants of $p$. Suppose $p$ is the first term of an $A_\infty$ morphism $(p, p_2, p_3, \ldots)$ where $p_n:A^{\otimes n}\rightarrow B$. Then the following statements hold.
\begin{itemize}
\item[i)] $p_2$ gives a homotopy from the second Boolean cumulant $K_2$ to zero. All the higher Boolean cumulants $K_n$ are also homotopic to zero using maps created by $p_2$ and $p_1$.
\item[ii)] $p_3$ gives a homotopy between different ways of making $K_3$ homotopic to zero. For all the higher Boolean cumulants, homotopies between the multiple different ways of making them homotopic to zero are homotopic to each other using $p_3$, $p_2$ and $p_1$.
\item[iii)] In general any cycles that are created using the homotopies $\{p_j\}_{j=1}^n$ are made homotopic to zero using maps made by $\{p_j\}_{j=1}^{n+1}$.
\end{itemize}

\end{Theorem}

The above theorem means that if $p$ is the first term of an $A_\infty$ morphism then the cumulants of $p$ completely collapse. That is, they are not only homotopic to zero, multiple homotopies are homotopic to each other. The definition of Boolean cumulants can be generalized to $A_\infty$ algebras in general. In this case while they are only defined up to homotopy. For the Boolean cumulants of a map between $A_\infty$ algebras we have the following theorem.

\begin{Theorem}
Let $A$ and $B$ be two $A_\infty$ algebras. Let $p$ be a chain map from $A$ to $B$. Let $K_2$, $K_3$ and so on be the Boolean cumulants of $p$ defined up to homotopy. Suppose $p$ is the first term of an $A_\infty$ morphism $(p, p_2, p_3, \ldots)$ where $p_n:A^{\otimes n}\rightarrow B$. Then the following statements hold.
\begin{itemize}
\item[i)] $p_2$ gives a homotopy from the second Boolean cumulant $K_2$ to zero. All the different ways of defining the higher Boolean cumulants $K_n$ are also homotopic to zero using maps created by $p_2$ and $p_1$.
\item[ii)] $p_3$ gives a homotopy between different ways of making $K_3$ homotopic to zero. For all the higher Boolean cumulants, homotopies between the multiple different ways of making them homotopic to zero are homotopic to each other using $p_3$, $p_2$ and $p_1$.
\item[iii)] In general any cycles that are created using the homotopies $\{p_j\}_{j=1}^n$ are made homotopic to zero using maps made by $\{p_j\}_{j=1}^{n+1}$.
\end{itemize}

\end{Theorem}

\section{$A_\infty$ algebras and their morphisms}

In 1963 James Stasheff defined a notion of an algebra that was associative up to 'infinite homotopy'. 

\begin{defn}
An $A_\infty$ algebra is a graded vector space $A$ with a collection of linear maps 
\[m_n:A[1]^{\otimes n} \rightarrow A[1]\]
such that $m_n$ have degree $1$ on and they satisfy the following equations for every $n$
\begin{equation}
\label{A-infinity equations}
\sum_{i+j=n} m_i(1\otimes 1\otimes \ldots m_j\ldots \otimes 1) = 0
\end{equation}
\end{defn}

The equations \ref{A-infinity equations} imply the following statements.
\begin{itemize}
\item $m_1$ is a linear map of degree $1$ that squares to zero. Thus $m_1$ is a differential on $A$.  
\item $m_2$ is a binary product and $m_1$ is a derivation of this binary product. 
\item Since $m_2$ is not associative, that associator $m_2(m_2\otimes 1)-m_2(1\otimes m_2)$ is not zero. $m_3$ is a map whose boundary is the associator. That is $m_3$ makes $m_2$ homotopic to being associative.
\item $m_n$, for $n$ larger than $3$, makes cycles created by $m_k$, for $k$ less than $n$, homotopic to zero. 
\end{itemize}

The homotopies given by $m_n$ can be described using polyhedrons described by Stasheff. For instance $m_3$ is a homotopy between the two terms of the associator and is described by a line. There are five different ways of combining four terms using a binary product and they correspond to five vertices of a pentagon that is used to describe $m_4$. The first three associahedra are described as follows. 

\begin{figure}[H]
\centering \includegraphics[width=5in]{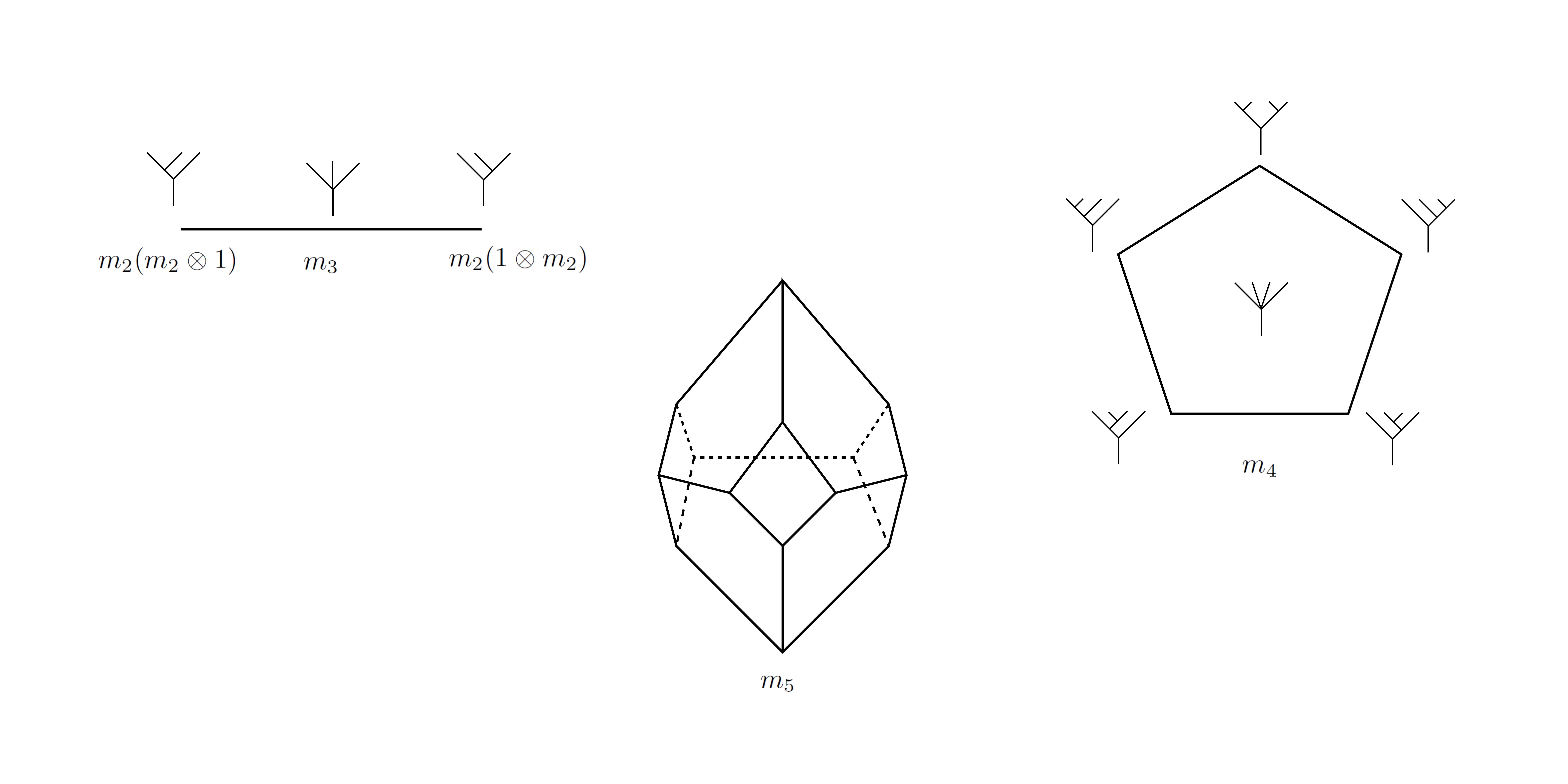}
\caption{}
\end{figure}

\begin{defn}
An \textit{$A_\infty$ morphism} $P$ between $A_\infty$ algebras $(A, m_1^A, m_2^A, \ldots)$ and $(B, m_1^B, m_2^B, \ldots)$ is a collection of linear maps 
\[p_n:A^{\otimes n} \rightarrow B\]
such that
\[\sum_{k=1}^n \sum_{n_1+\ldots+n_k=n} m_k^B(p_{n_1}\otimes\ldots \otimes p_{n_k})\]\[ = \sum_{k=1}^n \sum_{j=0}^{n-k} p_{n-k+1}(1\otimes \ldots m_k^A \ldots \otimes 1) \]

\end{defn}

\section{Structure of an $A_\infty$ morphism between \textit{dga}s}

Recall that between associative algebras without differentials, every $A_\infty$ morphism is in fact an algebra morphism. This is however not necessarily the case when we consider $A_\infty$ morphisms between \textit{dga}s. 

Suppose $(A,d_A,\wedge_A)$ and $(B,d_B,\wedge_B)$ are two differential graded algebra. Recall that by definition an $A_\infty$ morphism is a collection of maps$(p_1,p_2,\ldots)$, $p_n:A^{\otimes n}\rightarrow B$ where  which satisfy the following compatibility relations for every $n$. 
\[\sum_{i+j=n}\wedge_B(p_i\otimes p_j)+d_B\circ p_n = \sum p_{n-1}(1\otimes \ldots \wedge_A \ldots 1)+p_n(1\otimes \ldots d_A\ldots\otimes 1)  \] In particular for $n=1$ the compatibility relation is as follows.
\begin{equation}
\label{A_infty compatibility}
\wedge_B(p_1\otimes p_1) + d_B\circ p_2 = p_1\circ\wedge_A + p_2(d_A\otimes 1+1\otimes d_A)
\end{equation} 
Also recall that for a map $p_n:A^{\otimes n}\rightarrow B$, the differential of $p_n$ in the space $Hom(A^{\otimes n}, B)$ and is defines as 
\begin{equation}
\label{boundry}
(p_n)= d_B\circ p_n + (-1)^{n+1}p_n(1\otimes\ldots d_A \ldots \otimes 1)
\end{equation} 

We call this the boundary of the map $p_n$. Note that since $\partial(p_1) = 0$ which implies $p_1$ is a chain map.

\begin{lemma}
The Boolean cumulants $K_2$, $K_3$ and so on of the map $p_1:A\rightarrow B$ are boundaries of maps that can be constructed using the map $p_2$.
\end{lemma}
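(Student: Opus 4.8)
The plan is to carry out the argument inside the convolution dg-algebra built on $\bigoplus_{n\ge 1}\mathrm{Hom}(A^{\otimes n},B)$. On this graded space put the differential $\partial$ of \eqref{boundry} together with the (in general non-commutative) product
\[(f\ast g)(a_1,\dots,a_{p+q})=f(a_1,\dots,a_p)\wedge_B g(a_{p+1},\dots,a_{p+q}).\]
Since $d_B$ is a derivation of $\wedge_B$ and $d_A$ a derivation of $\wedge_A$, the operator $\partial$ is a graded derivation of $\ast$; in particular $\partial^2=0$ and the $\ast$-product of a $\partial$-cycle with a $\partial$-boundary is again a $\partial$-boundary. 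I will also use the iterated products $\mu_j\colon A^{\otimes j}\to A$, each of which is a chain map because $d_A$ is a derivation of $\wedge_A$, so that precomposition with $\mu_j\otimes\mathrm{id}^{\otimes r}$ commutes with $\partial$. Write $K_2^{[m]}:=K_2\circ(\mu_{m-1}\otimes\mathrm{id}_A)$, so that $K_2^{[m]}(a_1,\dots,a_m)=K_2(a_1\cdots a_{m-1},a_m)$.

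First I would dispose of $K_2$ itself. Transposing terms in the $n=1$ compatibility relation \eqref{A_infty compatibility} gives
\[d_B\circ p_2-p_2\circ(d_A\otimes\mathrm{id}+\mathrm{id}\otimes d_A)=p_1\circ\wedge_A-\wedge_B(p_1\otimes p_1),\]
whose left-hand side is $\partial p_2$ by \eqref{boundry} and whose right-hand side is $K_2$. Hence $K_2=\partial p_2$, and consequently $K_2^{[m]}=\partial\psi_m$ for every $m\ge 2$, where $\psi_m:=p_2\circ(\mu_{m-1}\otimes\mathrm{id}_A)$ is visibly a map built from $p_2$ and the multiplication of $A$.

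The combinatorial core is the identity
\[K_n=K_2^{[n]}-\sum_{k=1}^{n-2}K_k\ast K_2^{[n-k]}\qquad(n\ge 2),\]
that is, $K_n(a_1,\dots,a_n)=K_2(a_1\cdots a_{n-1},a_n)-\sum_{k=1}^{n-2}K_k(a_1,\dots,a_k)\,K_2(a_{k+1}\cdots a_{n-1},a_n)$. I would deduce it from the Boolean moment--cumulant recursion $p_1(a_1\cdots a_m)=\sum_{k=1}^m K_k(a_1,\dots,a_k)\,p_1(a_{k+1}\cdots a_m)$ (the $k=m$ summand being $K_m$), which itself follows from the defining sum for the cumulants by peeling the first block off each ordered partition: expand the right-hand side using $K_2(x,y)=p_1(xy)-p_1(x)p_1(y)$, apply the recursion to $a_1\cdots a_{n-1}$ to collapse the last sum, and then read off the result via the recursion for $a_1\cdots a_n$; the moment products cancel and what remains is exactly $K_n$. (For $n=3,4$ this matches the formulas in the Introduction by inspection.)

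It now suffices to induct on $n$. The base case is $K_2=\partial p_2$. Suppose $K_k=\partial\Phi_k$ with $\Phi_k$ assembled from $p_1,p_2$ for $2\le k<n$; together with $\partial K_1=\partial p_1=0$ this makes every $K_k$ with $k<n$ a $\partial$-cycle. Then $K_2^{[n]}=\partial\psi_n$, and each summand $K_k\ast K_2^{[n-k]}=K_k\ast\partial\psi_{n-k}$ equals $\varepsilon_k\,\partial\!\left(K_k\ast\psi_{n-k}\right)$ for a sign $\varepsilon_k$, because $\partial$ is a derivation of $\ast$ and $K_k$ is closed. Substituting into the identity above gives $K_n=\partial\Phi_n$ with
\[\Phi_n:=\psi_n-\sum_{k=1}^{n-2}\varepsilon_k\,K_k\ast\psi_{n-k},\]
a map built only from $p_1$, $p_2$, the products $\wedge_A,\wedge_B$ and the differentials --- no $p_j$ with $j\ge 3$ enters. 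I expect the only genuine friction to be the bookkeeping of the Koszul and degree signs: one must fix the $\varepsilon_k$ so that the three sources of signs (in \eqref{boundry}, in the derivation property of $\partial$, and in the combinatorial identity) are mutually consistent. Everything else in the argument is formal.
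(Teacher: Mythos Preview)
Your argument is correct, and it operates in the same convolution dg-algebra the paper implicitly uses; the difference lies in the recursion you feed into the induction. The paper invokes the two-term identity
\[K_n(a_1,\dots,a_n)=K_{n-1}(a_1a_2,a_3,\dots,a_n)-p_1(a_1)\,K_{n-1}(a_2,\dots,a_n),\]
i.e.\ $K_n=K_{n-1}\circ(\wedge_A\otimes\mathrm{id}^{\otimes(n-2)})-p_1\ast K_{n-1}$, so that if $K_{n-1}=\partial f$ then immediately $K_n=\partial\bigl(f\circ(\wedge_A\otimes\mathrm{id})-p_1\ast f\bigr)$, using only that precomposition with $\wedge_A$ is a chain map and that $p_1$ is $\partial$-closed. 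Your route instead unfolds $K_n$ against \emph{all} lower cumulants via the moment--cumulant inversion to get $K_n=K_2^{[n]}-\sum_{k=1}^{n-2}K_k\ast K_2^{[n-k]}$, and then closes the induction with the same derivation property. This is more work than the paper needs --- one application of the Leibniz rule per step suffices there --- but it buys you something: your primitive $\Phi_n=\psi_n-\sum\varepsilon_k\,K_k\ast\psi_{n-k}$ is written in closed form directly in terms of $p_1$, $p_2$ and iterated $A$-multiplication, whereas the paper's primitive is defined recursively in $n$. Your version also makes the link to the Boolean moment--cumulant formalism explicit, which the paper's shortcut hides.
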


\begin{proof}
We will prove this lemma by induction. For $a$ and $b$ in $A$, 
\[K_2(a,b) = p_1(\wedge_A(a,b) - \wedge_B(p_1(a), p_1(b)))\] For simplicity of notation we will suppress $\wedge_A$ and $\wedge_B$. Thus the formula for the cumulants is now more familiar.\[K_2(a,b) = p_1(ab) - p_1(a)p_1(b)\] Thus from equations \ref{A_infty compatibility} and \ref{boundry} we have that \[\partial(p_2)(a,b) = K_2(a,b)\] In general we know that 
\[K_n(a_1, a_2,\ldots,a_n) = \sum_{\text{ordered partitions of }n} \pm p_1(a_1\ldots a_i)p_1(a_{i+1}\ldots)\ldots p_1(\ldots a_n) \] In general we can describe $K_n$ in terms of $K_{n-1}$ and $p_1$ as follows. \[K_n(a_1, a_2, \ldots, a_n) = K_{n-1}(a_1a_2, a_3, \ldots, a_n) - p_1(a)K_{n-1}(a_2,\ldots,a_n)\] Since $K_{n-1}$ can be written as a boundary of some map $f$  and $\partial(p_1)=0$ we have \[K_n = \partial(f\circ (\wedge_A\otimes id)) - p_1\otimes \partial(f) = \partial(f\circ (\wedge_A\otimes id) - p_1\otimes f) \] This proves that all the cumulants are boundaries in the Hom-complex. 

\end{proof}

Note that for $K_3$, $K_4$ and so on there is not a unique way to write $K_n$ as a boundary of a map. Given that $K_2$ is the boundary of $p_2$, $K_3$ can be describes as the boundary of two different maps. \[K_3(a,b,c)= \partial(p_2(a b,c)-p_1(a) p_2(b,c))\] \[=\partial(p_2(a, b c) - p_2(a,b) p_1(c)) \] Similarly $K_4$ can be described as a boundary of multiple different maps. 

The terms of the $n$th cumulant correspond the the ordered partitions of $n$. We associate a graph $G_n$ to $K_n$. The vertices of $G_n$ correspond to terms of $K_n$ (or equivalently to ordered partitions of $n$). Two vertices are connected to each other via an edge for the corresponding partitions, one partition can be obtained from the other by splitting one of the sub strings. Note that the vertices of $G_n$ correspond to all the different ways of combining $n-1$ ordered inputs from $A$ using $p$ and the binary products to give exactly one output in $B$. If $p$ were an algebra map all of these ways would be equal.  

\begin{figure}[H]
\centering \includegraphics[width=5in]{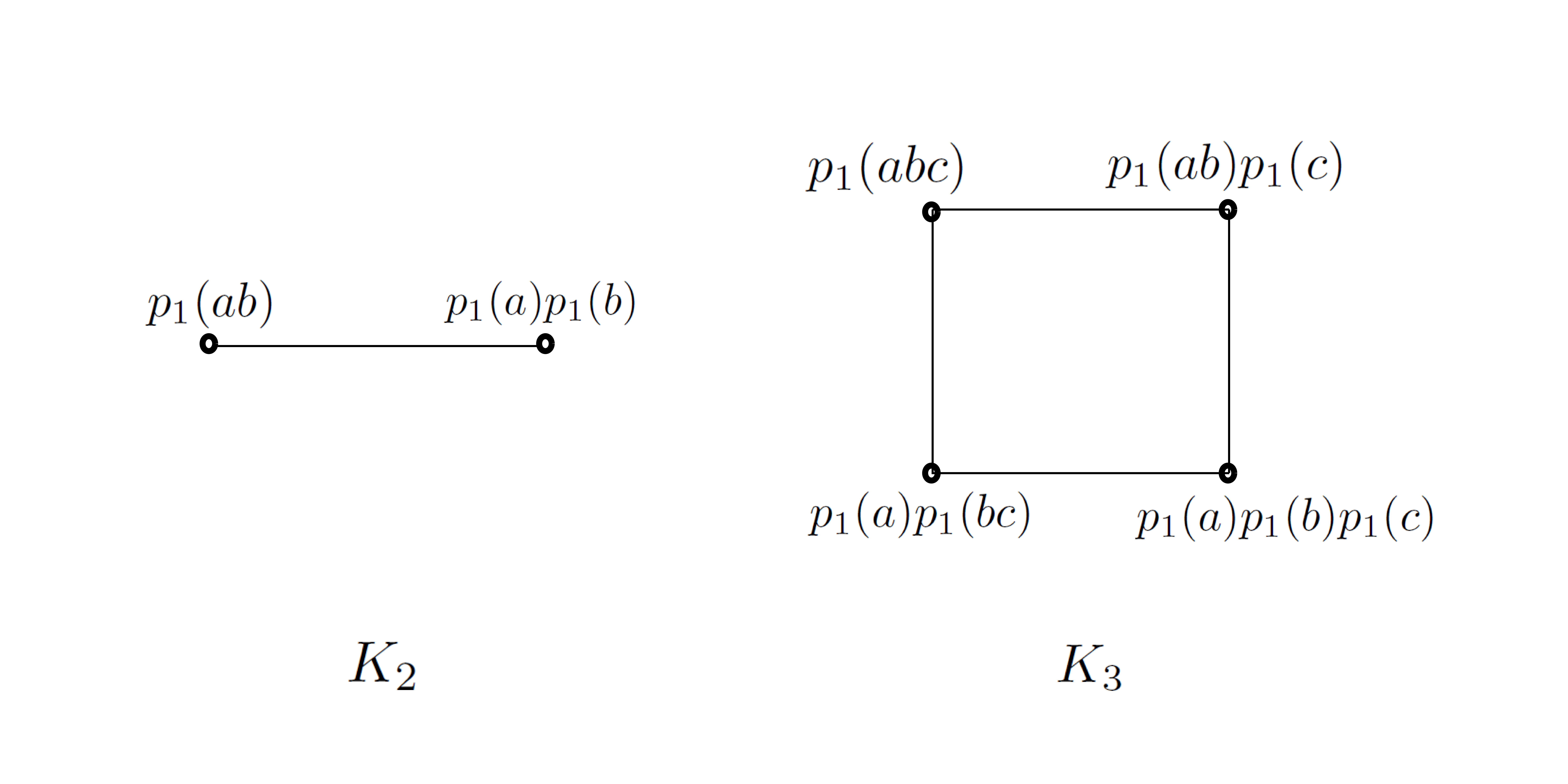}
\caption{}
\end{figure}

\begin{lemma}
The graph $G_n$ is the one skeleton of an $n-1$-cube. 
\end{lemma}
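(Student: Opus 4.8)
The plan is to exhibit an explicit graph isomorphism between $G_n$ and the $1$-skeleton of the cube $\{0,1\}^{n-1}$ by encoding an ordered partition (composition) of $n$ by its set of \emph{cut points}. Picture $n$ as a row of $n$ slots with $n-1$ gaps between consecutive slots, labelled $1,\dots,n-1$. An ordered partition $(n_1,\dots,n_k)$ of $n$ is recorded by the subset $S=\{n_1,\ n_1+n_2,\ \dots,\ n_1+\dots+n_{k-1}\}\subseteq\{1,\dots,n-1\}$ of gaps at which a cut is made, and this is a bijection between ordered partitions of $n$ and subsets of $\{1,\dots,n-1\}$. Identifying each subset with its indicator vector gives a bijection from the vertex set of $G_n$ onto $\{0,1\}^{n-1}$; in particular both sides have $2^{n-1}$ elements, which is the correct vertex count for an $(n-1)$-cube.

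It then remains to check that this bijection carries the edge relation of $G_n$ to that of the cube. Splitting one substring of a partition means choosing a part $n_i\ge 2$ and replacing it by two parts summing to $n_i$; in the cut-point picture this adjoins exactly one new gap, some $j\in\{1,\dots,n-1\}\setminus S$ lying strictly between the $(i-1)$-th and $i$-th existing cuts, and conversely adjoining any single new gap $j\notin S$ splits precisely the one part whose span contains $j$. Hence ``$P'$ is obtained from $P$ by splitting one substring'' is equivalent to ``$S'=S\cup\{j\}$ for some $j\notin S$''. Since the edge relation of $G_n$ is the symmetrization of this (a split read one way is a merge read the other), two vertices of $G_n$ are adjacent if and only if their subsets differ in exactly one element, i.e.\ their indicator vectors differ in exactly one coordinate --- which is precisely adjacency in the $1$-skeleton of $\{0,1\}^{n-1}$. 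Thus the bijection is a graph isomorphism.

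An alternative is to induct, using the recursion $K_n(a_1,\dots,a_n)=K_{n-1}(a_1a_2,a_3,\dots,a_n)-p_1(a_1)K_{n-1}(a_2,\dots,a_n)$ already used above. The terms of $K_n$ split into two families according to whether $a_1$ sits inside the same $p_1$ as $a_2$ or not; each family is a copy of the vertex set of $G_{n-1}$, the split and merge relations not involving the first gap reproduce a copy of $G_{n-1}$ on each family, and toggling the first gap gives a perfect matching between the two copies pairing corresponding vertices. Hence $G_n$ is the prism $I\times G_{n-1}$, and since the $(n-1)$-cube is the prism over the $(n-2)$-cube the claim follows from the base case $G_2$ (a single edge, the $1$-cube).

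The computations above are routine; the one point deserving care is the verification that splitting a single part corresponds to adjoining exactly one cut point --- no more and no fewer --- and, in the reverse direction, that two partitions differing by more than one cut are genuinely non-adjacent in $G_n$, so that the bijection introduces no spurious edges. The small cases $G_2$ (an edge) and $G_3$ (a $4$-cycle) are consistent with this.
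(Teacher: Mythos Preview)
Your proposal is correct. Your \emph{alternative} inductive argument---splitting the vertices of $G_n$ into two copies of $G_{n-1}$ according to whether $a_1$ shares a block with $a_2$, and observing that the remaining edges give a perfect matching making $G_n$ the prism over $G_{n-1}$---is exactly the paper's proof (the paper takes $G_3$ rather than $G_2$ as the base case, but this is immaterial).

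Your \emph{primary} approach via cut points is different and arguably cleaner: encoding each ordered partition by its set of cut positions in $\{1,\dots,n-1\}$ gives a direct, global graph isomorphism with the $(n-1)$-cube, with no induction needed. The paper's inductive argument, by contrast, builds the cube one dimension at a time and must separately verify that the matching between the two $(n-2)$-subcubes respects adjacency. The cut-point encoding has the advantage that it makes the higher-dimensional cell structure transparent as well (a $k$-face of the cube corresponds to fixing all but $k$ of the cut/no-cut choices), which is useful for the subsequent lemmas about $2$-cells. The inductive approach, on the other hand, ties more visibly to the algebraic recursion for $K_n$ that drives the rest of the paper.
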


\begin{proof}
We will prove this by induction. Note that $K_3$ is a square and recall \[K_n(a_1, a_2, \ldots, a_n) = K_{n-1}(a_1a_2, a_3, \ldots, a_n) - p_1(a_1)K_{n-1}(a_2,\ldots,a_n)\] By induction hypothesis the subgraphs of $G_n$ corresponding to the above two terms are a $n-2$-cubes (as $G_{n-1}$ is an $n-2$ cube. Edges that go between these subgraphs correspond to splitting sub-strings of the form $a_1a_2\ldots a_i$ into $a_1$ and $a_2\ldots a_i$. Thus for these edges give a one to one correspondence between the vertices of the two $n-2$ cubes. It is easy to check that the adjacent vertices in the first cube go to adjacent vertices in the second cube. Thus the graph of $G_n$ is an $n-1$ cube. 

\end{proof}

If two vertices of $G_n$ are connected by an edge then they occur with opposite signs in $K_n$. Also, the corresponding terms of the cumulant are the boundary of a map involving $p_1$ and $p_2$. For instance $ p_1(ab)p_1(c)-p_1(a)p_1(b)p_1(c)$ is the boundary of the map $p_2(a,b)p_1(c)$ and $p_1(abc)-p_1(a)p_1(bc)$
is the boundary of $p_2(a,bc)$. This is true because the differentials are derivations of the binary product and $p_1$ is a chain map. Thus we can label the edges of $G_n$ with the corresponding maps involving $p_2$. Thus cycles in $G_n$ correspond to cycles in $Hom(A^{\otimes n}, B)$. For instance the following map is the sum of the maps corresponding to the four edges of $G_3$. \[p_2(ab,c)-p_1(a)p_2(b,c) - p_2(a,bc) + p_2(a,b)p_1(c)\] This map is a cycle. 

Note that this map is essentially all the ways of composing the maps $p_2$ and $p_1$ withe the binary products. From the compatibility relation for $p_3$ we get that \[\partial(p_3)(a,b,c) = p_2(ab,c)-p_1(a)p_2(b,c) - p_2(a,bc) + p_2(a,b)p_1(c)\]

\begin{lemma}
The cycle corresponding to the squares in the cubes $G_n$ are boundaries of maps constructed using $p_3$, $p_2$ and $p_1$. 
\end{lemma}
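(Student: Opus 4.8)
The plan is to argue face by face: classify the square $2$-faces of the cube $G_n$ by the combinatorial positions of the two cut points that they vary, and in each class write down an explicit map in $Hom(A^{\otimes n},B)$ whose boundary is the cycle around that square. Recall that a vertex of $G_n$ is an ordered partition of $n$, equivalently a set $S\subseteq\{1,\dots,n-1\}$ of cut points, and that a $2$-face is determined by a choice of two distinct cut points $i<j$ together with a set $S\subseteq\{1,\dots,n-1\}\setminus\{i,j\}$ of ``frozen'' cuts; its four vertices are $S$, $S\cup\{i\}$, $S\cup\{j\}$ and $S\cup\{i,j\}$, and its cycle is the alternating sum of the four edge-label maps built from $p_2$. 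There are exactly two cases: (a) no cut of $S$ lies strictly between $i$ and $j$, so the two new cuts split the \emph{same} block $[l,r]$ of the partition $S$; and (b) some cut of $S$ separates $i$ from $j$, so the two new cuts split two \emph{disjoint} blocks $[l_1,r_1]\ni i$ and $[l_2,r_2]\ni j$ of $S$.

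First I would treat case (b). Set $u=a_{l_1}\cdots a_i$, $v=a_{i+1}\cdots a_{r_1}$, $w=a_{l_2}\cdots a_j$, $x=a_{j+1}\cdots a_{r_2}$, and let $X$, $Y$, $Z$ denote the products of $p_1$ over the frozen blocks of $S$ to the left of $[l_1,r_1]$, strictly between the two blocks, and to the right of $[l_2,r_2]$; these are cycles in the relevant Hom-complexes since $p_1$ is a chain map. Reading off the labels on the four edges, one sees that the square cycle equals, up to sign, $X\,p_2(u,v)\,Y\,K_2(w,x)\,Z-X\,K_2(u,v)\,Y\,p_2(w,x)\,Z$. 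Since $\partial$ is a graded derivation for insertion into the products (the differentials are derivations of the wedges and $p_1$ is a chain map) and $\partial p_2=K_2$ even on these composite inputs (iterated multiplication being a chain map), this expression is precisely $\pm\,\partial\big(X\,p_2(u,v)\,Y\,p_2(w,x)\,Z\big)$. So in case (b) the square cycle is already a boundary of a map built from $p_1$ and $p_2$.

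Next, case (a). Write the common block as $[l,r]$ with $l\le i<j\le r-1$, set $u=a_l\cdots a_i$, $v=a_{i+1}\cdots a_j$, $w=a_{j+1}\cdots a_r$, and let $X$, $Z$ be the products of $p_1$ over the frozen blocks to the left and right of $[l,r]$. The four edge labels are then $X\,p_2(u,vw)\,Z$, $X\,p_1(u)p_2(v,w)\,Z$, $X\,p_2(uv,w)\,Z$ and $X\,p_2(u,v)p_1(w)\,Z$; that is, the square is a copy of the $G_3$ square on the inputs $u,v,w$, framed by $X$ and $Z$. By the identity $\partial(p_3)(a,b,c)=p_2(ab,c)-p_1(a)p_2(b,c)-p_2(a,bc)+p_2(a,b)p_1(c)$ recalled just before the lemma, the bracketed combination equals $\pm\,\partial(p_3)(u,v,w)$, and since $X$ and $Z$ are cycles the whole square cycle is $\pm\,\partial\big(X\,p_3(u,v,w)\,Z\big)$, a boundary of a map constructed from $p_1$, $p_2$ and $p_3$. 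Combining the two cases, every square of every $G_n$ bounds such a map; morally, every square is just a framed copy of the $n=2$ or $n=3$ situation.

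I expect the only genuine obstacle to be sign bookkeeping: one must verify that the Koszul signs coming from the degree shift $A[1]$, from commuting $d_A$ past the $p_1$-factors in $X$, $Y$, $Z$, and from the orientation chosen for traversing the square all combine so that the alternating edge sum equals the stated boundary on the nose rather than merely up to an unspecified sign. The combinatorial content — the case split and the identification of each square with a framed lower-dimensional copy — is otherwise a direct unwinding of the edge labels on $G_n$ and of the derivation property of the differential on $Hom(A^{\otimes n},B)$.
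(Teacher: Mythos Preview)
Your proposal is correct and follows essentially the same approach as the paper: the paper's proof also splits into exactly these two cases (a single block being cut twice, filled by a $p_3$-type map, versus two separate blocks each being cut once, filled by a $p_2\otimes p_2$-type map), presenting them via two figures rather than the explicit formulas you write out. Your treatment is more detailed in spelling out the framing by the frozen $p_1$-factors $X,Y,Z$ and in invoking the derivation property of $\partial$, but the underlying argument is the same.
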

 
\begin{proof}
In general a square in $G_n$ is made with four vertices which differ in partitions added at two positions. There are two cases to consider. First is when a single substring is split into three in two different ways. Both these cases and the maps that give the homotopies to zero are shown in the following diagrams

\begin{figure}[H]
\centering \includegraphics[width=5in]{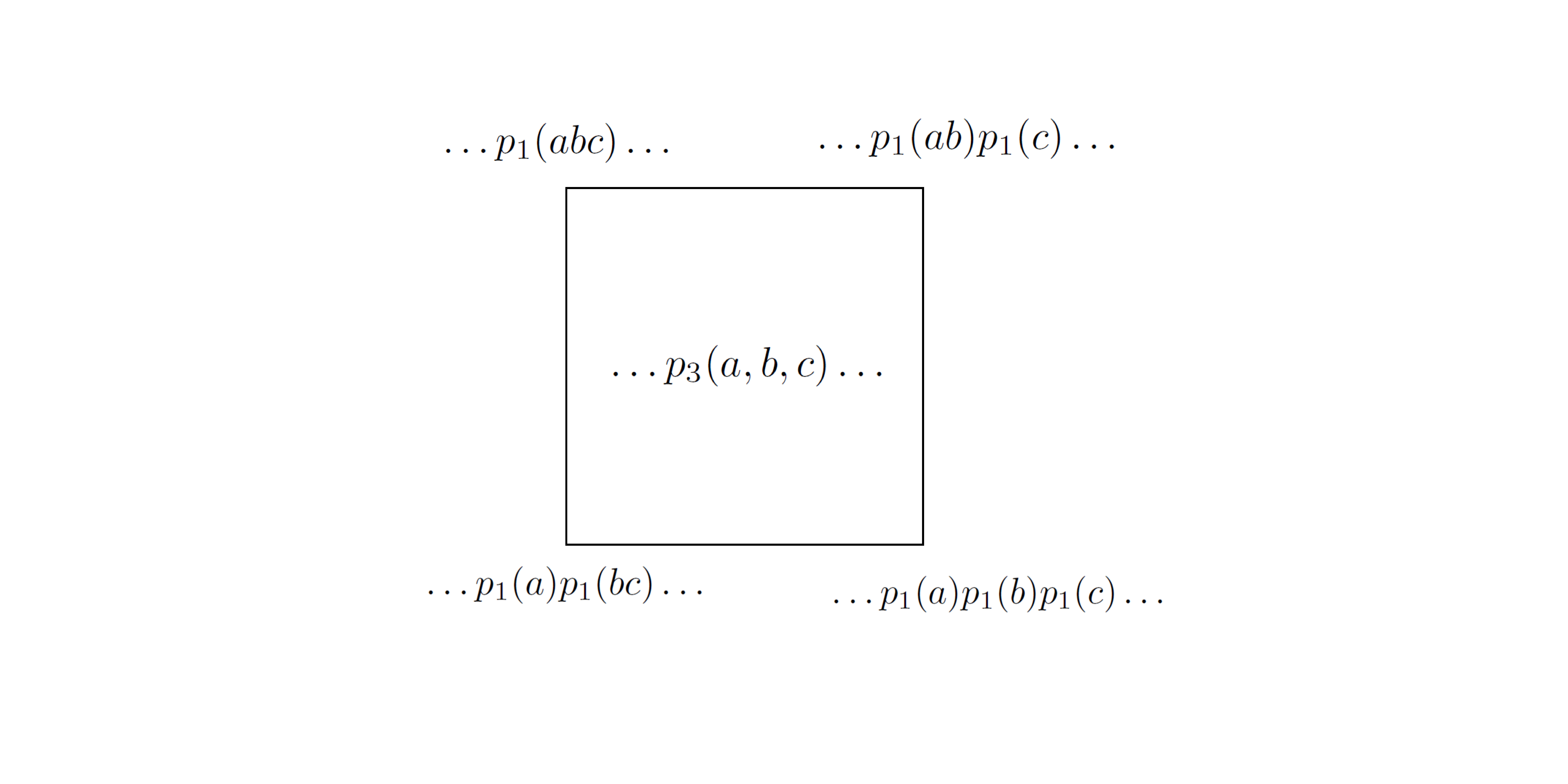}
\caption{\label{p3}}
\end{figure}

\begin{figure}[H]
\centering \includegraphics[width=5in]{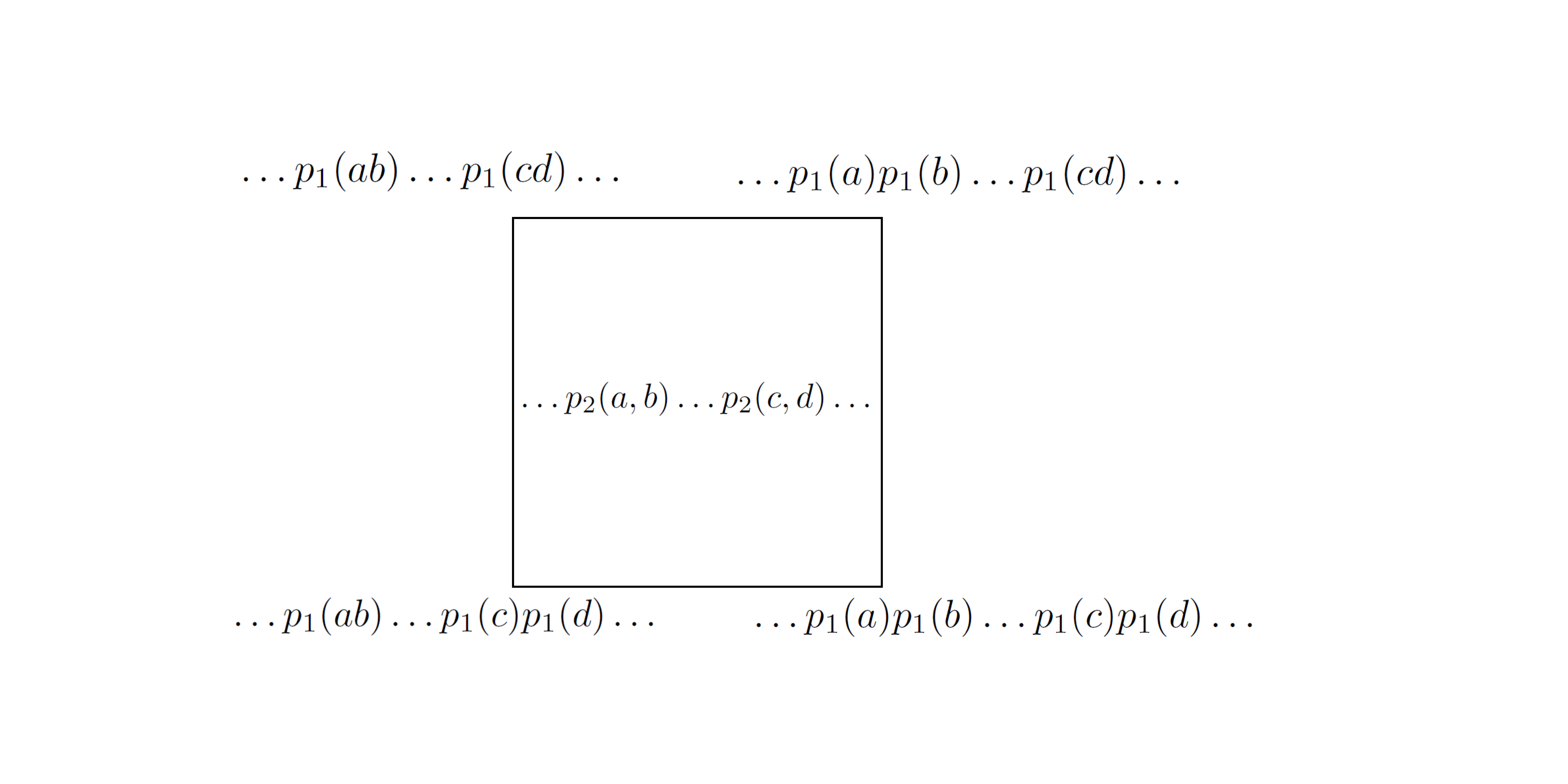}
\caption{\label{p2,p2}}
\end{figure}

\end{proof}
Let $g_n$ be an $n-2$-dimensional solid cube such that $G_n$ is its one skeleton. Then from the above lemma we can associate to the $2$-cells of $g_n$ maps made using $p_3$ and $p_2$. We are now ready to state and prove our theorem in the context of associative algebras.

\begin{Theorem}
Let $(A,\wedge_A, d_A)$ and $(B,\wedge_B, d_B)$ be two \textit{dga}s. Let $p$ be a chain map from $A$ to $B$. Let $K_2$, $K_3$ and so on be the Boolean cumulants of $p$. Suppose $p$ is the first term of an $A_\infty$ morphism $(p, p_2, p_3, \ldots)$ where $p_n:A^{\otimes n}\rightarrow B$. Then the following statements hold.
\begin{itemize}
\item[i)] $p_2$ gives a homotopy from the second Boolean cumulant $K_2$ to zero. All the higher Boolean cumulants $K_n$ are also homotopic to zero using maps created by $p_2$ and $p_1$.
\item[ii)] $p_3$ gives a homotopy between different ways of making $K_3$ homotopic to zero. For all the higher Boolean cumulants, homotopies between the multiple different ways of making them homotopic to zero are homotopic to each other using $p_3$, $p_2$ and $p_1$.
\item[iii)] In general any cycles that are created using the homotopies $\{p_j\}_{j=1}^n$ are made homotopic to zero using maps made by $\{p_j\}_{j=1}^{n+1}$.
\end{itemize}

\end{Theorem}

\begin{proof}
The previously proved lemmas prove the first two parts of this theorem. In general $2$ cycles created by $p_2$ and $p_3$ correspond to $2$ cycles in $g_n$. Consider the boundary of $p_n$ in general. Recall that from by definition $p_n$ satisfies the equation. 
\[\sum_{k=1}^n \sum_{n_1+\ldots+n_k=n} m_k^B(p_{n_1}\otimes\ldots \otimes p_{n_k})\]\[ = \sum_{k=1}^n \sum_{j=0}^{n-k} p_{n-k+1}(1\otimes \ldots m_k^A \ldots \otimes 1) \] Since in this case $m_k$ are all zero except for $k=1$ and $k=2$ we get
\[d(p_n)+\sum_{n_1+n_2=n} \wedge_B(p_{n_1}\otimes p_{n_2}) \]\[= \sum_{k=1}^np_n(1\otimes\ldots d\ldots\otimes 1) + \sum_{k=1}^{n-1}p_{n-1}(1\otimes\ldots \wedge_A\ldots\otimes 1 )\] By rearranging the terms of the above equation we find $\partial(p_n)$.
\[d(p_n)-\sum_{k=1}^np_n(1\otimes\ldots d\ldots\otimes 1)\]\[=  \sum_{k=1}^{n-1}p_{n-1}(1\otimes\ldots \wedge_A\ldots\otimes 1)-\sum_{n_1+n_2=n} \wedge_B(p_{n_1}\otimes p_{n_2})\]

Also  

\[\partial(\wedge_B(p_{n_1}\otimes p_{n_2})) = \wedge_B(\partial(p_{n_1})\otimes \partial(p_{n_2}))\]

Thus in general to a map of the type $p_{j_1}p_{j_2}\ldots p_{j_m}$ we associate a cell of dimension $j_1+j_2\ldots j_m - m$ which is attached in $g_n$ to the cycle corresponding to its boundary.

\begin{figure}[H]
\centering \includegraphics[width=5in]{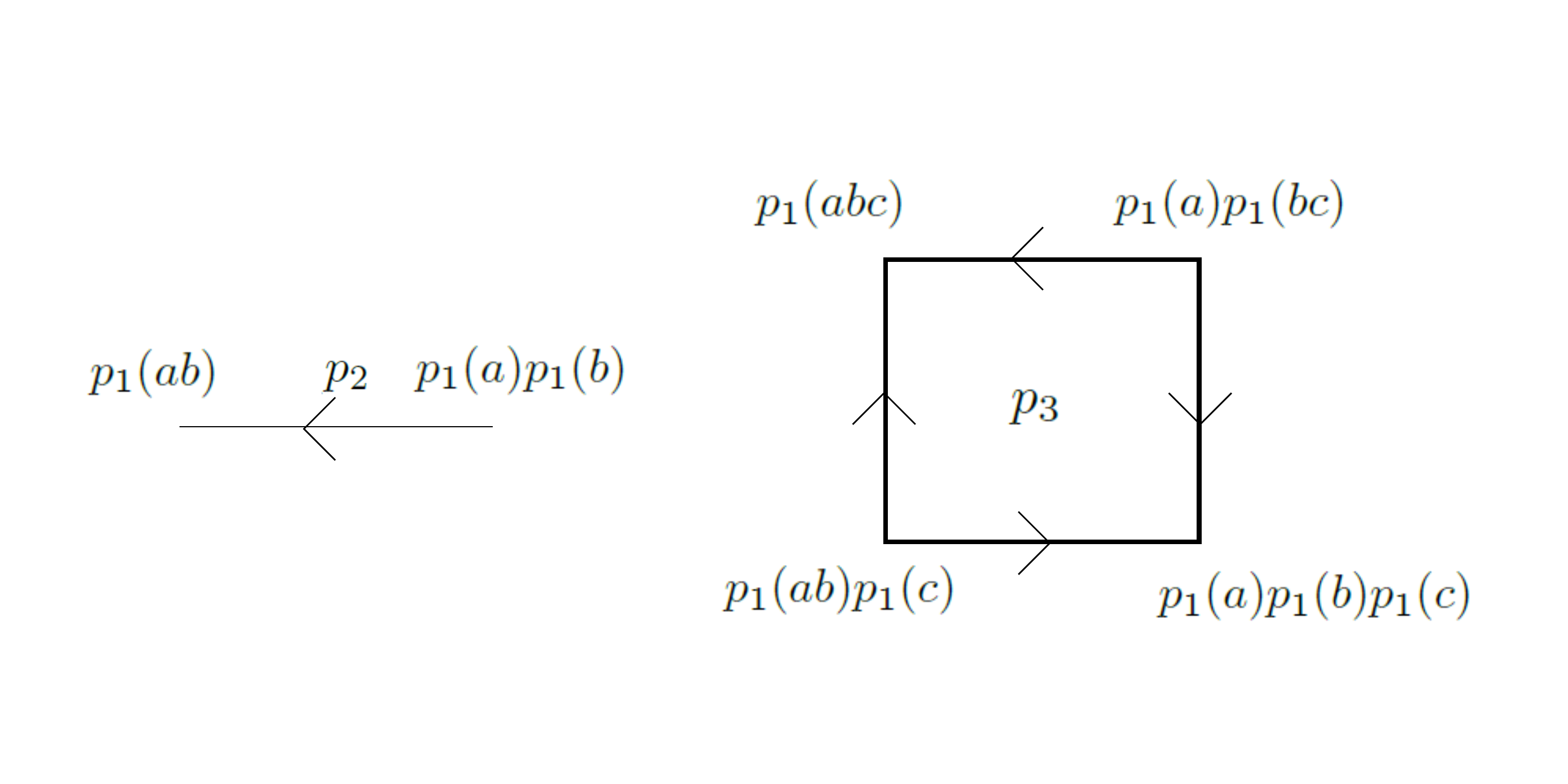}
\caption{$p_2$ and $p_3$}
\end{figure}

\begin{figure}[H]
\centering \includegraphics[width=5in]{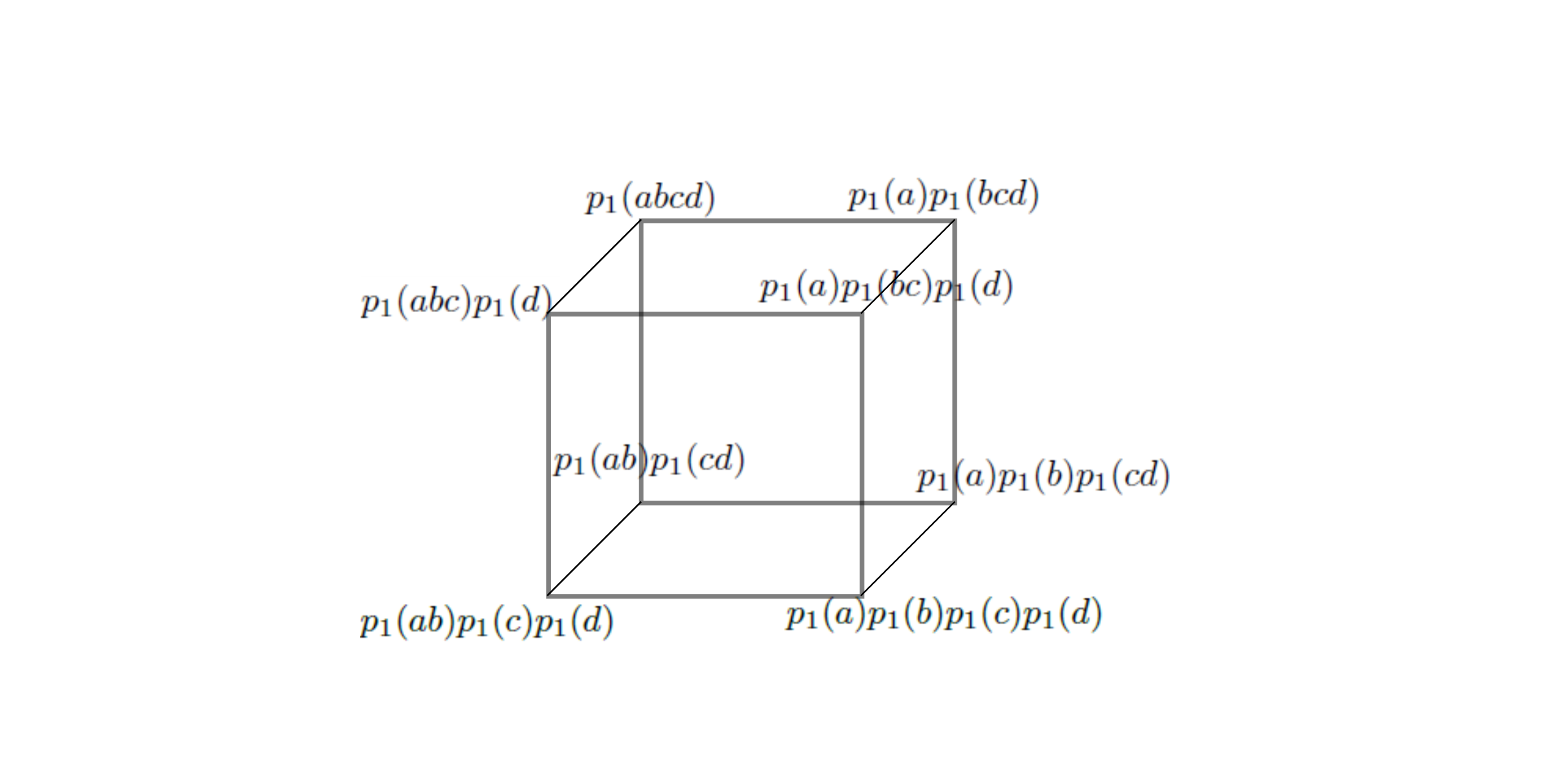}
\caption{$p_4$}
\end{figure}

Since $g_n$ are solid cubes, they are contractible. Also, all the cells of $g_n$ correspond to either a function of the form $p_1\ldots p_k\ldots p_1$ or a function of the form $p(a_1)\ldots p_k\ldots p_l \ldots p(a_n)$. Thus we have that all cycles created by $\{p_k\}$ are contractible.  

\end{proof}

\section{Differential forms and cochains on a manifold $M$}

\begin{defn} 
A \textit{differential graded associative algebra} or a \textit{dga} is a triple $(A,d,m)$ such that 
\begin{itemize}
\item[i)] $A = \bigoplus_{n\in \mathbb{Z}} A_n$ is a graded vector space. 
\item[ii)] $m:A\otimes A\rightarrow A$ is an associative product of degree zero. That is $m$ is associative and for $a\in A_n$ and $b\in A_m$, $m(a\otimes b)$ is in $A_{n+m}$.
\item[iii)] $d:A \rightarrow A$ is a linear map of degree $1$ (for $a \in A_n$, $d(a)\in A_{n+1}$) such that $d^2 = 0$. 
\item[iv)] (Leibniz Rule) $d$ and $m$ satisfy the following compatibility relationship \[d(m(a\otimes b)) = m(d(a)\otimes b) + (-1)^{|a|} m(a\otimes d(b)) \]
\end{itemize}

\end{defn}

Note that a \textit{dga} is an $A_\infty$ algebra in particular. The map $m_1$ is the differential of the \textit{dga}. The associative product is the map $m_2$. The maps $m_3$, $m_4$ and so on are all zero. A morphism of \textit{dga}s is an $A_\infty$ morphism in particular. However, an $A_\infty$ morphism between two \textit{dga}s is not necessarily a algebra morphism.

\begin{rmk}[Koszul sign convention]
Given two linear maps $f$ and $g$ of graded vector space we can define the $f\otimes g$ to be a map from the tensor product of the domain to the tensor product of the range. We use the Koszul sign convention when applying tensor products of linear maps. That is \[f\otimes g(x\otimes y) = (-1)^{|x||g|}(f(x)\otimes g(y))\]
\end{rmk}

\begin{example}
One of the first non-trivial example of a \textit{dga} is the algebra of differential forms on $\Omega^*(M)$ on a manifold $M$. The differential $d$ has degree one as $d$ of an $n$ form is an $n+1$ form. The product $m$ is the wedge product which has degree zero as the wedge product of an $m$-form with an $n$-form is a $(m+n)$-form. This algebra is also graded commutative as give two forms $\omega$ and $\eta$, \[\omega\wedge\eta = (-1)^{|\omega||\eta|}\eta\wedge\omega \]The wedge product induces the graded commutative cup product on the cohomology of the manifold.
\end{example}

\begin{defn}
A \textit{differential graded coalgebra} or a \textit{dg-coalgebra} is a triple $(C,\delta, \Delta)$ where
\begin{itemize}
\item[i)]  $C = \bigoplus_{n\in \mathbb{Z}} C_n$ is a graded vector space.
\item[ii)] $\Delta:C\rightarrow C\otimes C$ is a co-associative coproduct of degree zero. Coassociativity implies that \[(\Delta\otimes 1)\circ\Delta = (1\otimes \Delta)\circ\Delta \]
\item[iii)] $\delta:C \rightarrow C$ is a linear map of degree $-1$ (for $a \in C_n$, $\delta(a)\in C_{n-1}$) such that $\delta^2 = 0$. 
\item[iv)] (Leibniz Rule) $\delta$ and $\Delta$ satisfy the following compatibility relationship 
\[(\delta \otimes 1 + 1\otimes \delta)\circ \Delta = \Delta \circ \delta \]
\end{itemize}
\end{defn}

\begin{example}
\label{alexander-whitney}
Consider the chains $C_*$ on a finite simplicial decomposition of a space $X$. There is a coproduct map $\Delta:C_*\rightarrow C_*\otimes C_*$ called the Alexander-Whitney map which is given by the following formula on a simplex $ [v_0 ,v_1,\ldots, v_n] $.
\[\Delta([v_0,v_1,\ldots, v_n]) =  \sum_i [v_0,v_1,\ldots,v_i]\otimes [v_i,\ldots,v_n]\]
This product dualizes to an associative product $\mu$ on the cochains $C^*$. The associativity follows from the fact that the coproduct $\Delta$ is coassociative. Also the coproduct satisfies the co-Leibniz property with respect to the boundary operator $\partial$. That is, for a simplex $\sigma$ 
\[\Delta(\partial(\sigma)) = (1\otimes\partial + \partial\otimes 1)(\Delta(\sigma))\]
This implies that the co-boundary map $\delta$ is a derivation of the dual product $\mu$ on the cochains $C^*$. Thus $(C^*,\delta,\mu)$ is a differential graded algebra. Unlike the differential forms the cochains are not graded commutative, but the product $\mu$ also induces a graded commutative product on the cohomology of the space. 

\end{example} 

 \begin{example}[Map from differential forms to cochains]
Consider the algebra of differential forms $\Omega^*$ on a manifold $M$ and the cochains $C^*$ on a finite simplicial decomposition of $M$. The differential forms are a $dgca$ and the cochains have the Alexander-Whitney cup product which makes them a $dga$. Consider the map $I:\Omega^* \rightarrow C^*$ defined as follows. For a differential for $\omega$ and a simplex $\sigma$ \[I(\omega)(\sigma)= \int_{\sigma}\omega\] From Stoke's theorem it follows that $I$ is a chain map. It does not respect the product structure at the level of complexes. However, by the de Rahm's theorem $I$ induces an isomorphism on cohomology. The induced isomorphism is in fact a map of the algebra structures. Thus the Boolean cumulants of $I$ are defined on chains, but they vanish on cohomology since the induced map is an algebra map. 
\end{example}

In 1978 V. K. A. M. Gugenheim constructed an $A_\infty$ morphism whose first Taylor coefficient is $I$ \cite{gugenheim1978}. This construction uses iterated integrals as defined by Kuo-Tsai Chen \cite{chen1977}. We will consider the special case of forms and cochains on the interval $[0,1]$. The details of the case are worked out in the paper by Ruggero Bandiera and Florian Schaetz \cite{formsoninterval}

The $0$ cochains on $[0,1]$ are functions on the set $\{0,1\}$ and $1$ cochains are given by one generator corresponding to the one cell. We will call this generator $dt$. Thus a $1$ cochain is of the form $rdt$ where $r$ is in $\mathbb{R}$. The map $p$ is given for a zero form by taking the restriction of the function to the points $0$ and $1$. On the one forms it is given as follows.

\[I(f(x)dx) = (\int_0^1f(x)dx)dt\]

Recall that the associative cup product on the cochains is defined as follows. For two zero forms the cup product is the product of the two functions. For a zero form $F$ and a one form $rdt$ we have \[ F\cup rdt = F(0)rdt\]\[rdt\cup F = 0\] and the cup product of two one forms is zero. Note that this product is not associative and the map $I$ is not a map of algebras. We define the map $I_n:\Omega([0,1])^{\otimes n}\rightarrow C^*([0,1]$ as follows. If any of the inputs of $I_n$ is a zero form then $I_n$ is zero. For $n$ one forms \[I_n(f_1(x)dx,f_2(x)dx\ldots, f_n(x)dx) \]\[= ( \int_{t_1\leq t_2\leq \ldots \leq t_n} f_1(t_1)f_2(t_2)\ldots f_n(t_n)dt_1dt_2\ldots dt_n) dt\] $(I,I_2,I_3,\ldots)$ is an $A_\infty$ morphism from the differential forms to the cochains. 

Note that all the one forms on the interval are exact. Suppose $df_1$ and $df_2$ are exact forms then \[I_2(df_1,df_2) = I_2(d(f_1,df_2)) = \partial(I_2)(f_1,df_2) = K_2(f_1,df_2)\] In general for forms $df_1$, $df_2$, $df_3$ and so on we have \[I_n(df_1,df_2,\ldots,df_n) = I_n(d(f_1,df_2,\ldots,df_n)) = \partial(I_n)(f_1,df_2,\ldots, df_n)\] The above expression is equal to \[I_1(f_1)I_{n-1}(df_2,\ldots, df_n) \pm I_{n-1}(f_1df_2,\ldots, df_n) \] We can compute this expression by induction on $n$. 

For a general simplex $\Delta^n$of dimension $n$, and the map $I:\Omega(\Delta^n)\rightarrow C^*(\Delta^n)$ maps $I_n$ can by defined using iterated integrals in a manner very similar to the case of $[0,1]$. For a simplicial decomposition of a manifold, the maps are locally defined on each simplex and can be glued together to extend the integral map to an $A_\infty$ morphism.

\section{Structure of a general $A_\infty$ morphism}

The Boolean cumulants can be defined for a map between $A_\infty$ algebras in multiple ways up to homotopy. Suppose $A$ and $B$ are $A_\infty$ algebras and $p$ is a chain map between them. In an $A_\infty$ algebra products of three or more variables are not well defined thus $(abc)$ can be defined as $(ab)c$ or $a(bc)$. Thus while there is only one way to define $K_1$ and $K_2$, there are four different ways of defining $K_3$. All the four ways of defining $K_3$ are homotopic to each other since $p((ab)c)$ is homotopic to $p(a(bc))$ and $p(a)(p(b)p(c))$ is homotopic to $(p(a)p(b))p(c)$. 

\begin{lemma}
Suppose $A$ and $B$ are $A_\infty$ algebras. The different ways of defining the cumulants are homotopic to each other via the maps $m_2$. Multiple homotopies given in this manner are all homotopic to each other, the homotopies of such homotopies are homotopic to each other and so on 
\end{lemma}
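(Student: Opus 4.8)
The plan is to mimic, one level down, the argument used for the \textit{dga} case: replace the solid cube $g_n$ by a contractible polytope $\mathcal{L}_n$ assembled from Stasheff associahedra, decorate its cells by maps in $\mathrm{Hom}(A^{\otimes n},B)$ built from $p$ and the structure maps $m_k^A,m_k^B$, and read off the collapse from contractibility. A way of writing down $K_n$ amounts to choosing, for every ordered partition $\pi=(B_1,\dots,B_k)$ of $(1,\dots,n)$ into consecutive blocks, a parenthesization of each block $B_i$ inside $A$ (a vertex of the associahedron $\mathsf{K}_{|B_i|}$) together with a parenthesization of the resulting product $p(\cdots)\cdots p(\cdots)$ of $k$ factors inside $B$ (a vertex of $\mathsf{K}_k$); the summand of $K_n$ indexed by $\pi$ is then $\pm\, m^B\bigl(p(m^A(a_{B_1})),\dots,p(m^A(a_{B_k}))\bigr)$ with the Koszul sign. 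Hence the set of ways of defining $K_n$ is exactly the vertex set of the product of convex polytopes
\[\mathcal{L}_n \;=\; \prod_{\pi}\Bigl(\mathsf{K}_{k(\pi)}\times\textstyle\prod_i\mathsf{K}_{|B_i(\pi)|}\Bigr),\]
and I would take $\mathcal{L}_n$ as the indexing complex for all the homotopies and higher homotopies in the statement. (Here $\mathsf{K}_m$ is the Stasheff associahedron on an $m$-fold product, a point when $m\le 2$; it is not the cumulant $K_m$.)

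The second step is to decorate the cells of $\mathcal{L}_n$. An edge of a factor $\mathsf{K}_3$ is an elementary re-association of a triple, either inside one $p$-argument (carried out by $m_3^A$) or among three $p$-values in $B$ (carried out by $m_3^B$), and to such an edge I attach the obvious composite of that $m_3$ with the binary products and $p$; more generally, to a $d$-cell of $\mathcal{L}_n$ — a product of faces of associahedra — I attach the corresponding composite of $p$, the $m_2$'s and the higher $m_k$'s. The crucial point is that the $A_\infty$ relations for $A$ and for $B$ ($m_1$ a derivation of $m_2$, $\partial(m_3)$ the associator of $m_2$, the pentagon $\partial(m_4)=\cdots$, and so on), together with the facts that $p$ is a chain map and that each differential is a derivation of the corresponding binary product, imply exactly that the $\mathrm{Hom}$-complex boundary of the map attached to a cell equals the signed sum of the maps attached to its codimension-one faces. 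This is precisely the mechanism of the \textit{dga} theorem, now carrying the full tower of the $m_k$'s in place of $p_2,p_3,\dots$.

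Finally, $\mathcal{L}_n$ is a product of convex polytopes, hence contractible, so its reduced cellular chain complex is acyclic; through the decoration this says that every cycle assembled from $p$ and the $m_k$'s — in particular the difference of two ways of defining $K_n$, then the difference of two homotopies between two such ways, then any homotopy among those homotopies, and so on — is the boundary of a map of the same kind, which is the assertion of the lemma. One may think of $\mathcal{L}_n$ as the shadow left by Stasheff's multiplihedron $J_n$ when $p$ is taken to be a strict chain map; $J_n$ itself, with the partition directions reinstated, is what will govern the general morphism case.

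The step I expect to be the main obstacle is the bookkeeping that keeps the decoration consistent: getting the Koszul signs to match, and especially checking the ``interchange'' $2$-cells, where one re-parenthesizes inside a $p$-argument \emph{and} in the ambient $B$-product at once, or in two different terms of $K_n$ simultaneously — these are product cells, so they must be filled and the maps associated to their boundaries must vanish, which is a short but sign-sensitive verification. One must also confirm that $\mathcal{L}_n$ genuinely carries the product-of-associahedra cell structure — i.e.\ that re-association moves in distinct blocks and in distinct terms of $K_n$ really are independent — which is transparent here but is precisely the feature that is lost, and has to be replaced by the cube $g_n$, once the passage to the general morphism case is made.
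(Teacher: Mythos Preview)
Your proposal is correct and follows the same idea as the paper's own proof: both reduce the lemma to the contractibility of Stasheff associahedra. The paper's argument is a single short paragraph --- it observes that the ambiguities in each term of $K_n$ are indexed by vertices of associahedra, that edges give the homotopies, higher cells the higher homotopies, and contractibility finishes the job. Your version is a more explicit and careful elaboration of exactly this: you package the ambiguities of \emph{all} terms simultaneously into the product polytope $\mathcal{L}_n=\prod_\pi(\mathsf{K}_{k(\pi)}\times\prod_i\mathsf{K}_{|B_i(\pi)|})$, decorate its cells, and invoke contractibility of the product. This is a genuine refinement of the paper's sketch (in particular you correctly note that each term needs a \emph{product} of associahedra, one per block and one for the outer $B$-product, which the paper elides), but it is not a different route. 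The interchange squares you flag as the main obstacle are, as you suspect, filled by zero since homotopies in distinct summands of $K_n$ do not interact; this is the only place your extra generality over the paper's term-by-term phrasing costs anything, and the cost is nil.
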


\begin{proof}
The terms of the cumulants that are defined only up to homotopy correspond to the vertices of Stasheff associahedra. The different ways of defining the cumulants are homotopic to each other via the edges. The two cells correspond to the homotopies of such homotopies and so on. Since the associahedra are contractible, the above lemma follows. 
\end{proof}

Now suppose $(p_1,p_2,\ldots)$ is an $A_\infty$ morphism from $A$ to $B$. The compatibility equation still implies that $p_2$ gives a homotopy between $p_1(ab)$ and $p_1(a)p_1(b)$. However we now have \[p_1((ab)c) \neq p_1(a(bc))\] \[\{p_1(a) p_1(b)\} p_1(c) \neq p_1(a) \{p_1(b) p_1(c)\}\]There are a triple products $m^A_3$ and $m^B_3$ on $A$ and $B$ respectively, which makes terms homotopic to each other. When $A$ and $B$ were associative, there were four different ways of combining three inputs from $A$ using $p_1$ and the binary products to give one output from $B$. When $A$ and $B$ are $A_\infty$ algebras there are six different ways that are now homotopic to each other via maps involving $p_2$, $p_1$, $m_2$ and $m_3$. 

\begin{lemma}
The cycle created by various homotopies between the several ways of combining three inputs is homotopic to zero via the homotopy $p_3$. 
\end{lemma}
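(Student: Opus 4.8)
The plan is to run the same graph-and-cell argument used for \textit{dga}s, now with the square $G_3$ replaced by a hexagon. First I would enumerate the six ways of combining the ordered inputs $a,b,c$ into a single element of $B$ using $p_1$ and the two binary products: the two bracketings $p_1((ab)c)$ and $p_1(a(bc))$ of a product formed entirely in $A$; the two mixed terms $m_2^B(p_1(ab),p_1(c))$ and $m_2^B(p_1(a),p_1(bc))$; and the two bracketings $m_2^B(m_2^B(p_1(a),p_1(b)),p_1(c))$ and $m_2^B(p_1(a),m_2^B(p_1(b),p_1(c)))$ of a product formed entirely in $B$. I would take these as the vertices of a hexagon $H_3$ (the $A_\infty$-morphism analogue of the square $G_3$; it is the two-dimensional cell of the polytope sometimes called the multiplihedron $J_3$), ordered so that consecutive vertices differ by one elementary move, and then label each of the six edges by the homotopy realising that move.

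Next I would produce and check the edge labels. Four of the edges already occur in the \textit{dga} case: they carry $p_2(m_2^A\otimes 1)$, $m_2^B(p_2\otimes p_1)$, $m_2^B(p_1\otimes p_2)$ and $p_2(1\otimes m_2^A)$, and since $m_1$ is a derivation of $m_2$ and $p_1$ is a chain map, the boundary in $\mathrm{Hom}(A^{\otimes 3},B)$ of each of these is the difference of its two endpoints, by the $n=1$ morphism relation $\partial(p_2)=p_1\circ m_2^A-m_2^B(p_1\otimes p_1)$. The two new edges carry $p_1\circ m_3^A$ and $m_3^B(p_1\otimes p_1\otimes p_1)$. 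For $p_1\circ m_3^A$, the $A_\infty$ relation $\partial(m_3^A)=m_2^A(m_2^A\otimes 1)-m_2^A(1\otimes m_2^A)$ and the chain-map property of $p_1$ give $\partial(p_1\circ m_3^A)=p_1((ab)c)-p_1(a(bc))$; for $m_3^B(p_1\otimes p_1\otimes p_1)$ the analogous relation in $B$ together with the fact that $m_3^B$ composed with $m_1^A$ in a slot equals $m_1^B$ acting on the corresponding $p_1$-factor give $\partial\big(m_3^B(p_1\otimes p_1\otimes p_1)\big)=m_2^B(m_2^B(p_1,p_1),p_1)-m_2^B(p_1,m_2^B(p_1,p_1))$. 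Hence every edge of $H_3$ is labelled by a homotopy between its endpoints, and the signed sum of the six labels around $H_3$ is a cycle in $\mathrm{Hom}(A^{\otimes 3},B)$; this is ``the cycle created by the various homotopies'' in the statement.

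Finally I would identify that cycle with $\partial(p_3)$. Writing the $A_\infty$ morphism equation for $n=3$ and moving the terms $m_1^B\circ p_3$ and $p_3(m_1^A\otimes 1\otimes 1+1\otimes m_1^A\otimes 1+1\otimes 1\otimes m_1^A)$ to one side, the remaining six terms are exactly $p_2(m_2^A\otimes 1)$, $p_2(1\otimes m_2^A)$, $p_1\circ m_3^A$, $m_2^B(p_1\otimes p_2)$, $m_2^B(p_2\otimes p_1)$ and $m_3^B(p_1\otimes p_1\otimes p_1)$ --- the six edge labels, with the signs they carry around $H_3$. So the hexagon cycle equals $\partial(p_3)$, hence it is homotopic to zero via $p_3$; equivalently $p_3$ is the two-cell filling the contractible hexagon $H_3$. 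I expect the only real obstacle to be sign bookkeeping: one must fix a consistent orientation of the six edges of $H_3$ and push the Koszul signs through (\ref{boundry}) and through the $A_\infty$ relations so that ``boundary of an edge label $=$ difference of endpoints'' holds coherently, and so that the signs that emerge match those obtained by rearranging the $n=3$ morphism equation. Once the $n=2$ and $n=3$ cases are pinned down, the general statement --- cycles assembled from $\{p_j\}_{j=1}^{n}$, together with the Stasheff cells for the $m_k^A$ and $m_k^B$, bound via $\{p_j\}_{j=1}^{n+1}$ --- follows from the same contractibility/dimension count used on the solid cubes $g_n$, applied now to the multiplihedra.
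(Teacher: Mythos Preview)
Your proposal is correct and follows essentially the same argument as the paper: enumerate the six vertices of the hexagon $G_3$, label its six edges by the homotopies $p_2(m_2^A\otimes 1)$, $p_2(1\otimes m_2^A)$, $m_2^B(p_2\otimes p_1)$, $m_2^B(p_1\otimes p_2)$, $p_1\circ m_3^A$, $m_3^B(p_1\otimes p_1\otimes p_1)$, and then read off from the $n=3$ morphism relation that $\partial(p_3)$ equals the signed sum of these edge labels. Your write-up is in fact more explicit than the paper's (which relies on a figure), and your identification of $H_3$ with the multiplihedron $J_3$ is a nice addition.
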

\begin{proof}

Thus if we made a graph $G_3$ with six vertices each corresponding to ways of combining $n$ inputs, and edges corresponding to appropriate homotopies, we get a hexagon. Recall that the equation the $p_3$ satisfies gives the value of $\partial(p_3)$ to be

\[d(p_3)-p_3(\tilde{d})\]
\[= p_2(m_2\otimes 1+1\otimes m_2)-m_2(p_1\otimes p_2+p_2\otimes p_1) \]\[+ p_1(m_3) - m_3(p_1\otimes p_1) \otimes p_1 \]

Note that the six terms of the boundary $p_3$ correspond to homotopies between adjacent vertices of hexagon $G_3$. 

\begin{figure}[H]
\centering \includegraphics[width=5in]{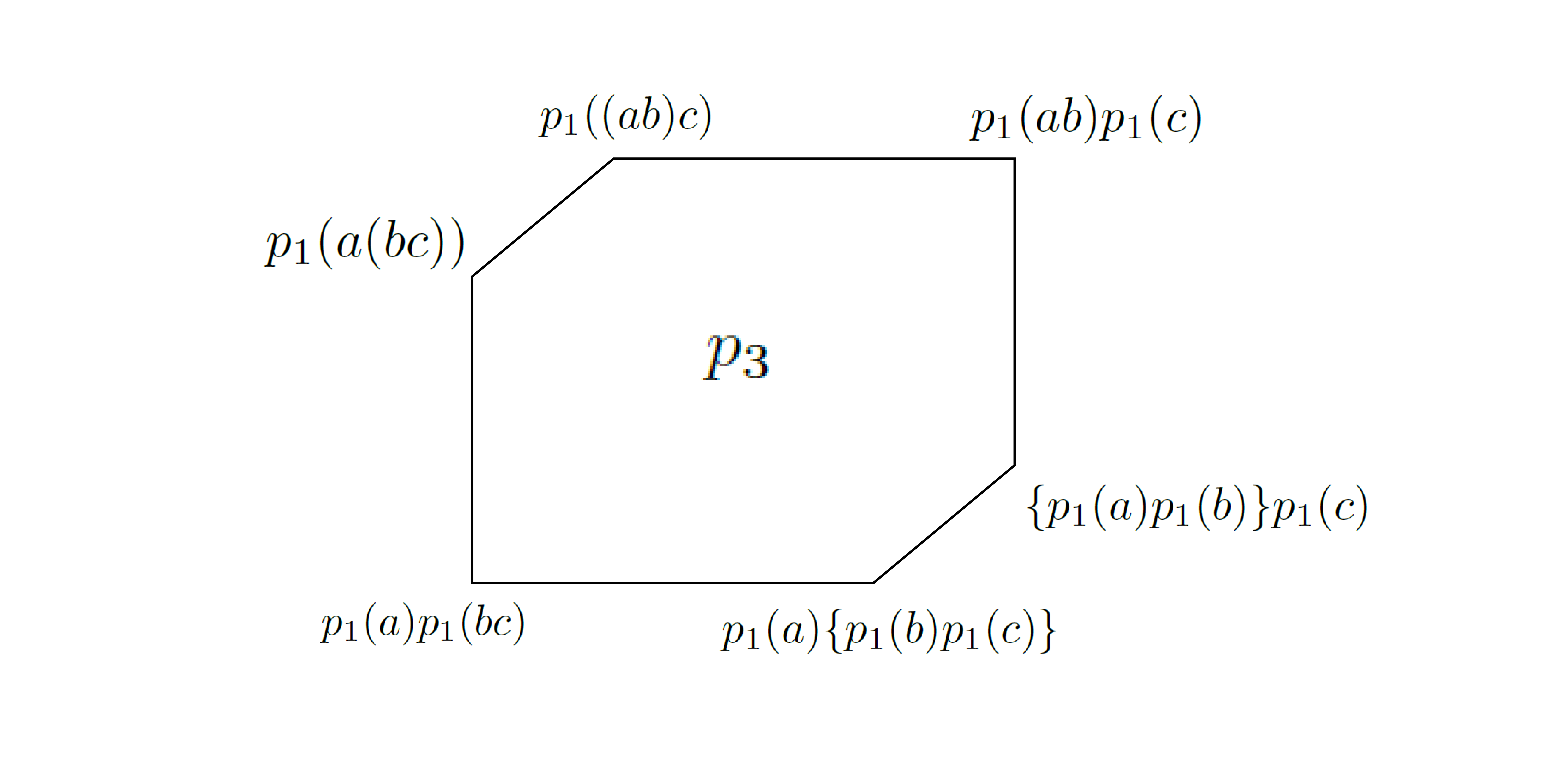}
\caption{}
\end{figure}

\end{proof}

Similarly for $K_4$ we get the following polyhedron 

\begin{figure}[H]
\centering \includegraphics[width=5in]{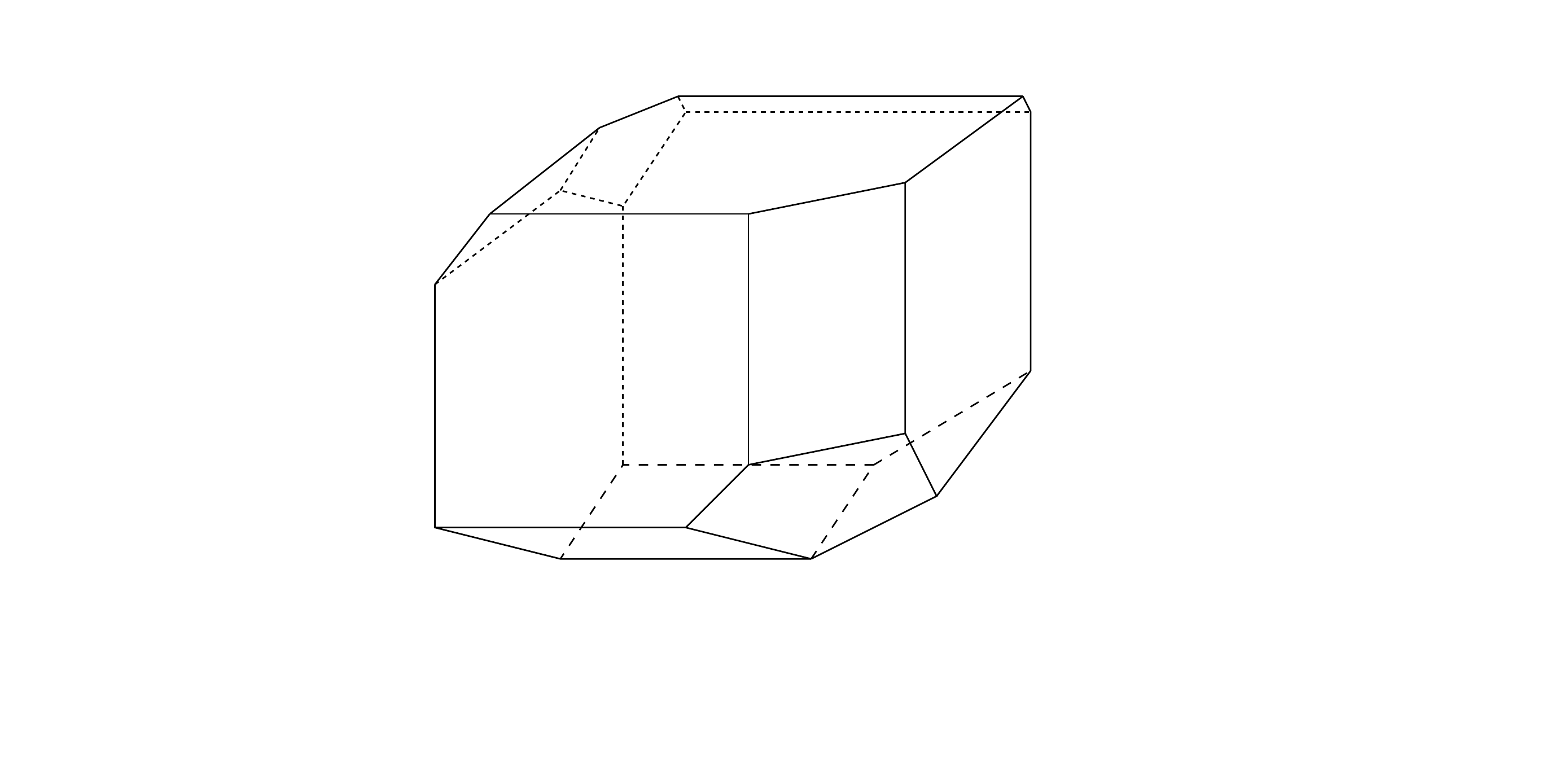}
\caption{}
\end{figure}

In the context of $A_\infty$ algebras the Boolean cumulants are only defined up to homotopy. In general for every $K_n$ there is an $n-1$ dimensional polyhedron whose cells correspond to maps which take $n$ inputs that are compositions of maps $p_j$'s and $m_j$'s. 

Since the Stasheff associahedra make these different ways homotopic to each other and indeed different homotopies are homotopic to each other and so on, we have the following theorem in the context of $A_\infty$ cumulants.

\begin{Theorem}
Let $A$ and $B$ be two $A_\infty$ algebras. Let $p$ be a chain map from $A$ to $B$. Let $K_2$, $K_3$ and so on be the Boolean cumulants of $p$ defined up to homotopy. Suppose $p$ is the first term of an $A_\infty$ morphism $(p, p_2, p_3, \ldots)$ where $p_n:A^{\otimes n}\rightarrow B$. Then the following statements hold.
\begin{itemize}
\item[i)] $p_2$ gives a homotopy from the second Boolean cumulant $K_2$ to zero. All the different ways of defining the higher Boolean cumulants $K_n$ are also homotopic to zero using maps created by $p_2$ and $p_1$.
\item[ii)] $p_3$ gives a homotopy between different ways of making $K_3$ homotopic to zero. For all the higher Boolean cumulants, homotopies between the multiple different ways of making them homotopic to zero are homotopic to each other using $p_3$, $p_2$ and $p_1$.
\item[iii)] In general any cycles that are created using the homotopies $\{p_j\}_{j=1}^n$ are made homotopic to zero using maps made by $\{p_j\}_{j=1}^{n+1}$.
\end{itemize}

\end{Theorem}

\begin{proof}
The proof of this theorem follows from the fact that the polyhedrons corresponding to each $p_n$ are contractible. 

\[\partial(p_n) = d(p_n) - p_n(d) \]\[= \sum_{k=2}^n \sum_{j=0}^{n-k} p_{n-k+1}(1\otimes \ldots m_k \ldots \otimes 1)\]\[\pm \sum_{k=2}^n \sum_{n_1+\ldots+n_k=n} m_k(p_{n_1}\otimes\ldots \otimes p_{n_k}) \]

Corresponding to each term in the above sum there is a face of the polyhedron.

The cells of the polyhedrons correspond to concrete maps constructed using $p_j$ and $m_j$ for smaller $j$. 

\end{proof}

\nocite{*}

\bibliography{Paper}
\bibliographystyle{plain}

\end{document}